\shorttitle}
\@nx\MakeUppercase{\the\toks@}}
\patchcmd\@settitle{\uppercasenonmath\@title}{\Large}{}{}
\authors}
\newtheorem{theorem}{Theorem}[section]
\newtheorem{remark}{Remark}[section]
\numberwithin{equation}{section}
\newcommand\norm[1]{\left\lVert#1\right\rVert}
\newcommand\aps[1]{\left\lvert#1\right\rvert}
\newcommand\skal[2]{\left\langle #1,#2\right\rangle}
\begin{document}
		\title[Birkhoff-James orthogonality: A Minimax Theorem approach]{Birkhoff-James orthogonality: A Minimax Theorem approach}
		\keywords{Birkhoff-James orthogonality, Bhatia-\v Semrl Theorem, Minimax Theorem}
		
		\subjclass[2020]{ 46B20, 47L05, 47B02}
		
		\author[H. Stankovi\'c]{Hranislav Stankovi\'c}
		\address{Faculty of Electronic Engineering, University of Ni\v s, Aleksandra Medvedeva 14, Ni\v s, Serbia
		}
		\email{\url{hranislav.stankovic@elfak.ni.ac.rs}}
		
		\date{\today}
		
		\maketitle
		
		\begin{abstract}
			In this paper, we present an elementary proof of the Bhatia-\v Semrl Theorem, utilizing  the Minimax Theorem for bounded linear operators by Asplund and Ptak \cite{AsplundPtak71}. Some related results are also discussed.
		\end{abstract}
		
		\bigskip 
		\section{Introduction}
		
		Orthogonality plays a crucial role in understanding the structure and geometry of Banach spaces and the behavior of operators acting on them. While in Hilbert spaces, orthogonality is naturally defined through inner products, Banach spaces require more generalized concepts to describe relationships between vectors or operators. One such concepts is Birkhoff-James orthogonality (see \cite{Birkhoff35, James45, James47a, James47b}), which broadens the traditional notion of orthogonality.
		
		A vector $x$ in a Banach space $\mathcal{X}$ over a field $\mathbb{K}$ is said to be \emph{Birkhoff-James orthogonal} to another vector $y\in \mathcal{X}$ if for all $\lambda\in\mathbb{K}$, 
		\begin{equation*}
			\norm{x+\lambda y}\geq \norm{x}.
		\end{equation*}
		We write $x\perp_B y$, or simply, $x\perp y$. In the case of Hilbert spaces, it is easy to see that Birkhoff-James orthogonality is equivalent to the classical orthogonality. 
		
		In recent years, the study of Birkhoff-James orthogonality and the complex problems it presents in Banach spaces has gained significant attention. The issue of symmetric points with respect to this form of orthogonality has been explored in works such as \cite{CvetkovicIlicStankovic25, GhoshSainPaul16, Sain17, Turnsek05, Turnsek17}, while topics like smoothness and the norm-attainment problem have been investigated in \cite{PaulSainGhosh16, SainPaulMandal19}.
		
		One particularly intriguing case arises when the Banach space of interest is taken to be  $\mathfrak{B}(\mathcal{X})$, the algebra of all bounded linear operators on Banach or Hilbert space $\mathcal{X}$. A central result in this direction is the Bhatia-\v Semrl Theorem \cite[Theorem 1.1 and Remark 3.1]{BhatiaSemrl99}, which provides an exact characterization of Birkhoff-James orthogonality for bounded linear operators on Hilbert spaces. More precisely, the theorem states that if $A,B\in\mathfrak{B}(\mathcal{H})$, where $\mathcal{H}$ is  a  
		Hilbert space with dimension strictly greater than one, then  $A\perp B$ if and only if there exists a sequence $\{x_n\}_{n\in\mathbb{N}}$ of unit vectors in $\mathcal{H}$ such that $\lim\limits_{n\to\infty}\norm{Ax_n}=\norm{A}$ and $\lim\limits_{n\to\infty}\skal{Ax_n}{Bx_n}=0$. In finite-dimensional case, the latter condition simplifies to the existence of a unit vector $x\in\mathcal{H}$  such that $\norm{Ax}=\norm{A}$ and $\skal{Ax}{Bx}=0$. For an alternative characterization of orthogonality in the context of matrices, see \cite{LiSchneider02}.
		
		Over the years, various methods have been applied to prove the Bhatia-Šemrl Theorem. In the finite-dimensional case, proofs have utilized directional orthogonality \cite{RoyBagchiSain22} and convex optimization techniques \cite{BhattacharyyakeyGrover13}. For the real case, the orthogonality of real bilinear forms was used in \cite{RoySenapatiSain21}. In the general setting, approaches have included $r$-orthogonality \cite{Turnsek17} and the $\rho$-G\^ ateaux derivative method \cite{Keckic05}.
		\medskip 
		
		The goal of this paper is to provide an elementary proof of the Bhatia-\v Semrl Theorem in the general case. A key tool in our approach is the following result by Asplund and Ptak \cite{AsplundPtak71}.
		
		\begin{theorem}[Minimax Theorem]\cite[Theorem 3.1]{AsplundPtak71}\label{thm:minimax}
			Let $\mathcal{H},\mathcal{K}$ be at least two-dimensional inner-product spaces over a field $\mathbb{K}\in\{\mathbb{C}, \mathbb{R}\}$, and let $A,B\in\mathfrak{B}(\mathcal{H},\mathcal{K})$. Then,
			\begin{equation}\label{eq:minimax}
				\sup_{\norm{x}\leq 1}\inf_{\lambda\in\mathbb{K}}\norm{Ax+\lambda Bx}=\inf_{\lambda\in\mathbb{K}}\sup_{\norm{x}\leq 1}\norm{Ax+\lambda Bx}.
			\end{equation}
		\end{theorem}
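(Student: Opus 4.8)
The plan is to establish only the nontrivial inequality, since $\sup_x\inf_\lambda \le \inf_\lambda\sup_x$ holds for any function. Writing $\phi(\lambda)=\sup_{\norm{x}\le1}\norm{Ax+\lambda Bx}=\norm{A+\lambda B}$, the right-hand side of \eqref{eq:minimax} is $m:=\inf_{\lambda}\phi(\lambda)$, so it suffices to produce (a sequence of) unit vectors realizing $m$ on the left. After disposing of the trivial case $B=0$, I would first treat $\dim\mathcal{H}<\infty$. Here $\phi$ is convex and, since $\norm{A+\lambda B}\ge\aps{\lambda}\norm{B}-\norm{A}$, coercive, so it attains its minimum at some $\lambda_0\in\mathbb{K}$; set $C=A+\lambda_0 B$ and $m=\norm{C}$ (the subcase $C=0$ being immediate).

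The heart of the argument is to produce a unit vector $x_0$ with $\norm{Cx_0}=m$ and $\skal{Cx_0}{Bx_0}=0$. Let $E$ be the eigenspace of $C^*C$ for its largest eigenvalue $m^2$, so that the norming set $S=\{x:\norm{x}=1,\ \norm{Cx}=m\}$ is exactly the unit sphere of $E$. Since $\lambda_0$ minimizes the convex function $\psi=\phi^2$, every one-sided directional derivative of $\psi$ at $\lambda_0$ is nonnegative; by a Danskin-type computation (the supremum defining $\psi$ is attained precisely on $S$) this reads $\max_{x\in S}2\,\mathrm{Re}\,(\mu\,\overline{\skal{Cx}{Bx}})\ge0$ for every $\mu\in\mathbb{K}$. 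A separating-hyperplane argument then forces $0$ into the convex hull of $\{\skal{Cx}{Bx}:x\in S\}$. Now, for $x\in E$ one has $\skal{Cx}{Bx}=\skal{x}{(PC^*B)|_E\,x}$ with $P$ the orthogonal projection onto $E$, so this set is precisely the numerical range of an operator on $E$. By the Toeplitz--Hausdorff theorem (the interval version when $\mathbb{K}=\mathbb{R}$) this numerical range is convex, hence it already contains $0$, which yields the desired $x_0$. Completing the square, $\norm{Ax_0+\lambda Bx_0}^2=\norm{Cx_0+(\lambda-\lambda_0)Bx_0}^2=m^2+\aps{\lambda-\lambda_0}^2\norm{Bx_0}^2\ge m^2$, so $\inf_\lambda\norm{Ax_0+\lambda Bx_0}=m$ and the left-hand side of \eqref{eq:minimax} is at least $m$.

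To pass to arbitrary $\mathcal{H}$, I would exhaust $\mathcal{H}$ by an increasing sequence of finite-dimensional subspaces $\mathcal{H}_1\subseteq\mathcal{H}_2\subseteq\cdots$ with dense union and $\dim\mathcal{H}_n\ge2$. Applying the finite-dimensional case to the restrictions gives $\sup_{x\in\mathcal{H}_n,\,\norm{x}\le1}\inf_\lambda\norm{Ax+\lambda Bx}=\inf_\lambda\norm{(A+\lambda B)|_{\mathcal{H}_n}}=:m_n$, and the $m_n$ increase to $m=\inf_\lambda\norm{A+\lambda B}$: indeed $m_n\le m$, while choosing minimizers $\lambda_n$ (bounded once $n$ is large, by coercivity, as $B\neq0$) and passing to a limit $\lambda_*$ yields $\norm{(A+\lambda_* B)|_{\mathcal{H}_k}}\le\lim_n m_n$ for every $k$, hence $\norm{A+\lambda_* B}\le\lim_n m_n$ and so $m\le\lim_n m_n$. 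Since each supremum over $\mathcal{H}_n$ is dominated by the full supremum, the left-hand side of \eqref{eq:minimax} is at least $\lim_n m_n=m$, completing the proof.

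I expect the main obstacle to be the complex case of the first-order step: the minimality of $\lambda_0$ only delivers $0$ in the \emph{convex hull} of $\{\skal{Cx}{Bx}:x\in S\}$, and bridging the gap to a genuine norming vector with $\skal{Cx_0}{Bx_0}=0$ is exactly where the inner-product geometry is indispensable; the convexity of the numerical range (Toeplitz--Hausdorff) is what makes this bridge, and it is also where the at-least-two-dimensionality hypotheses serve to rule out degeneracy. A secondary technical point is the interchange of limits in the infinite-dimensional reduction, which is handled above through the coercivity-driven boundedness of the minimizers $\lambda_n$.
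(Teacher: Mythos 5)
The paper offers no proof of this statement: it is imported verbatim as \cite[Theorem 3.1]{AsplundPtak71}, so there is no internal argument to measure yours against. Judged on its own terms, your proof is correct and, as it happens, close in spirit to the original, whose ``related numerical range'' is essentially your set $\{\skal{Cx}{Bx}: x\in S\}$. The chain --- convexity plus coercivity gives a minimizer $\lambda_0$; first-order optimality (Danskin) gives $\max_{x\in S}\mathrm{Re}\,(\mu\overline{\skal{Cx}{Bx}})\geq 0$ for all $\mu$, hence $0$ in the convex hull; Toeplitz--Hausdorff upgrades the convex hull to the set itself; completing the square gives $\inf_\lambda\norm{Ax_0+\lambda Bx_0}=m$ --- is sound, and the degenerate cases ($B=0$, $C=0$, $\dim E=1$) are all harmless, so your suspicion that the two-dimensionality hypothesis is needed to avoid degeneracy here is actually unfounded: a singleton whose convex hull contains $0$ is already $\{0\}$. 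It is worth noting that your finite-dimensional core in fact produces a unit vector $x_0$ with $\norm{Cx_0}=\norm{C}$ and $\skal{Cx_0}{Bx_0}=0$ for the minimizing $C=A+\lambda_0B$, i.e.\ it proves the Bhatia--\v Semrl conclusion directly for the pair $(C,B)$; this makes transparent why the paper can derive Theorem \ref{thm:bhatia_semrl_general} so quickly from the minimax identity.

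Two small repairs are worth recording. First, the passage to infinite dimensions uses an increasing sequence of finite-dimensional subspaces with dense union, which presumes $\mathcal{H}$ separable, whereas the statement allows arbitrary inner-product spaces; this is fixed either by running the same monotonicity argument over the directed family of all finite-dimensional subspaces, or by first passing to the separable closed span of countably many unit vectors that norm $B$ and norm $A+\lambda B$ for $\lambda$ in a countable dense subset of a ball containing all near-minimizers. Second, the Danskin step deserves a word of justification (the maximizing set on the unit sphere is compact and the $\lambda$-derivative is jointly continuous, both automatic in finite dimensions); alternatively it can be replaced by the elementary substitute of choosing maximizers $x_t$ for $\psi(\lambda_0+t\mu)$, extracting a convergent subsequence as $t\to 0^+$, and passing to the limit in the resulting inequality. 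Neither point affects the validity of the overall plan.
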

		
		\begin{remark}
			It is easy to check that the condition \eqref{eq:minimax} is equivalent with
		\begin{equation*}
			\sup_{\norm{x}= 1}\inf_{\lambda\in\mathbb{K}}\norm{Ax+\lambda Bx}=\inf_{\lambda\in\mathbb{K}}\sup_{\norm{x}= 1}\norm{Ax+\lambda Bx}.
		\end{equation*}
		\end{remark}

		\bigskip 
		\section{Proof of Bhatia-\v Semrl Theorem}
		
		We start by proving the general version of the Bhatia-\v Semrl theorem. 
		
		\begin{theorem}\label{thm:bhatia_semrl_general}
			Let $\mathcal{H}$ be a complex Hilbert space, $\dim\mathcal{H}\geq 2$, and let $A,B\in\mathfrak{B}(\mathcal{H})$. Then $A\perp B$ if and only if there exists a sequence $\{x_n\}_{n\in\mathbb{N}}$ of unit vectors in $\mathcal{H}$ such that $\lim\limits_{n\to\infty}\norm{Ax_n}=\norm{A}$ and $\lim\limits_{n\to\infty}\skal{Ax_n}{Bx_n}=0$.
		\end{theorem}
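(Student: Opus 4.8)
The plan is to turn the operator statement $A\perp B$ into the two-sided optimization controlled by Theorem~\ref{thm:minimax} and then simply read off the sequence. First I would record that $\sup_{\norm{x}=1}\norm{Ax+\lambda Bx}=\norm{A+\lambda B}$, so that the right-hand side of the minimax identity (in the $\norm{x}=1$ form supplied by the Remark) is exactly $\inf_{\lambda\in\mathbb{C}}\norm{A+\lambda B}$. Since $A\perp B$ means $\norm{A+\lambda B}\geq\norm{A}$ for all $\lambda$, and the choice $\lambda=0$ forces the reverse inequality for the infimum, orthogonality is equivalent to $\inf_{\lambda}\norm{A+\lambda B}=\norm{A}$. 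Hence everything hinges on the left-hand side $\sup_{\norm{x}=1}\phi(x)$, where $\phi(x):=\inf_{\lambda\in\mathbb{C}}\norm{Ax+\lambda Bx}$.

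The computation I would do next is the inner minimization for a fixed unit vector $x$. Expanding gives $\norm{Ax+\lambda Bx}^2=\norm{Ax}^2+2\,\mathrm{Re}\big(\bar\lambda\skal{Ax}{Bx}\big)+\aps{\lambda}^2\norm{Bx}^2$, and completing the square in $\lambda$ (equivalently, projecting $Ax$ onto $\mathrm{span}\{Bx\}$) yields
\begin{equation*}
\phi(x)^2=\norm{Ax}^2-\frac{\aps{\skal{Ax}{Bx}}^2}{\norm{Bx}^2},
\end{equation*}
with the convention that the fraction is $0$ when $Bx=0$ (and then $\skal{Ax}{Bx}=0$ too). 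In particular $\phi(x)^2\leq\norm{Ax}^2\leq\norm{A}^2$ for unit $x$. For the ``only if'' direction I would assume $A\perp B$; combining the reduction above with Theorem~\ref{thm:minimax} gives $\sup_{\norm{x}=1}\phi(x)=\norm{A}$, hence $\sup_{\norm{x}=1}\phi(x)^2=\norm{A}^2$. Choosing unit vectors $x_n$ with $\phi(x_n)^2\to\norm{A}^2$ and writing
\begin{equation*}
0\leq\big(\norm{A}^2-\norm{Ax_n}^2\big)+\frac{\aps{\skal{Ax_n}{Bx_n}}^2}{\norm{Bx_n}^2}=\norm{A}^2-\phi(x_n)^2\to0
\end{equation*}
with both summands nonnegative, I would conclude $\norm{Ax_n}\to\norm{A}$ and $\aps{\skal{Ax_n}{Bx_n}}^2/\norm{Bx_n}^2\to0$. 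Using $\norm{Bx_n}\leq\norm{B}$ then gives $\aps{\skal{Ax_n}{Bx_n}}^2\leq\big(\aps{\skal{Ax_n}{Bx_n}}^2/\norm{Bx_n}^2\big)\norm{B}^2\to0$, so $\skal{Ax_n}{Bx_n}\to0$.

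For the ``if'' direction I would argue directly, without invoking the minimax theorem: given such a sequence, for every $\lambda\in\mathbb{C}$,
\begin{equation*}
\norm{A+\lambda B}^2\geq\norm{(A+\lambda B)x_n}^2=\norm{Ax_n}^2+2\,\mathrm{Re}\big(\bar\lambda\skal{Ax_n}{Bx_n}\big)+\aps{\lambda}^2\norm{Bx_n}^2\geq\norm{Ax_n}^2+2\,\mathrm{Re}\big(\bar\lambda\skal{Ax_n}{Bx_n}\big),
\end{equation*}
and letting $n\to\infty$ yields $\norm{A+\lambda B}^2\geq\norm{A}^2$, i.e. $A\perp B$.

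I expect the only genuine obstacle to lie in the ``only if'' direction, and to be two-fold: performing the inner minimization cleanly across the degenerate case $Bx=0$, and, more subtly, recovering $\skal{Ax_n}{Bx_n}\to0$ from $\aps{\skal{Ax_n}{Bx_n}}^2/\norm{Bx_n}^2\to0$. This last step is precisely where the uniform bound $\norm{Bx_n}\leq\norm{B}$ is indispensable, since without controlling the denominator the numerator need not vanish. Finally, I would note that the hypothesis $\dim\mathcal{H}\geq2$ is used only to apply Theorem~\ref{thm:minimax}, and that the switch between $\norm{x}\leq1$ and $\norm{x}=1$ is licensed by the Remark.
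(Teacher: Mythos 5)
Your proof is correct, and it shares the paper's overall skeleton --- reduce $A\perp B$ to $\inf_{\lambda}\norm{A+\lambda B}=\norm{A}$, invoke Theorem~\ref{thm:minimax} to get $\sup_{\norm{x}=1}\inf_{\lambda}\norm{Ax+\lambda Bx}=\norm{A}$, and pick a near-maximizing sequence of unit vectors --- but the crucial extraction of the two limits is carried out genuinely differently. The paper never evaluates the inner infimum: it keeps the inequality $\norm{Ax_n+\lambda Bx_n}>\norm{A}-\tfrac1n$ valid for all $\lambda$ (normalizing $\norm{A}\geq 1$ so it can be squared), obtains $\norm{Ax_n}\to\norm{A}$ from the choice $\lambda=0$, passes to a subsequence along which $\skal{Ax_n}{Bx_n}\to L$, and then shows $L=0$ by letting $\lambda\to 0$ along the real and imaginary axes in the limiting inequality $0\leq\aps{\lambda}^2\norm{B}^2+2\,\mathrm{Re}(\overline{\lambda}L)$. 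You instead compute the inner infimum in closed form, $\phi(x)^2=\norm{Ax}^2-\aps{\skal{Ax}{Bx}}^2/\norm{Bx}^2$, so that $\norm{A}^2-\phi(x_n)^2$ splits into two nonnegative deficits that must both vanish, and the uniform bound $\norm{Bx_n}\leq\norm{B}$ upgrades the vanishing of the quotient to $\skal{Ax_n}{Bx_n}\to 0$. Your route buys a cleaner endgame --- no subsequence extraction, no normalization of $\norm{A}$, and the conclusion holds for the very sequence you chose --- at the small cost of handling the degenerate case $Bx=0$, which you do correctly. The paper's softer variant has the compensating advantage, noted in its remark, that simply omitting the final paragraph yields the $r$-orthogonality result of Turn\v sek; both arguments are equally elementary, and your forward implication is identical to the paper's.
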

		
		\begin{proof}
			Let $\lambda\in\mathbb{C}$ be arbitrary and let $\{x_n\}_{n\in\mathbb{N}}$ be a sequence with the mentioned properties. Then,
			\begin{align*}
				\norm{A+\lambda B}^2&\geq \norm{(A+\lambda B)x_n}^2\\
				&= \norm{Ax_n}^2+\aps{\lambda}^2\norm{Bx_n}^2+2\,\mathrm{Re\,}(\overline{\lambda}\skal{Ax_n}{Bx_n})\\
				&\geq \norm{Ax_n}^2+2\,\mathrm{Re\,}(\overline{\lambda}\skal{Ax_n}{Bx_n}).
			\end{align*}
			By letting $n\to\infty$, we get
			\begin{align*}
				\norm{A+\lambda B}^2&\geq \lim\limits_{n\to\infty}(\norm{Ax_n}^2+2\,\mathrm{Re\,}(\overline{\lambda}\skal{Ax_n}{Bx_n}))\\
				&=\lim\limits_{n\to\infty}\norm{Ax_n}^2+2\,\mathrm{Re\,}(\overline{\lambda}\lim\limits_{n\to\infty}\skal{Ax_n}{Bx_n})\\
				&=\norm{A}^2.
			\end{align*}
			Thus, $\norm{A+\lambda B}\geq\norm{A}$ for all $\lambda\in\mathbb{C}$, or equivalently, $A\perp B$. \par 
			\medskip 
			
			Now let $A\perp B$. Without loss of generality, we may assume that $\norm{A}\geq 1$. Then,
			\begin{equation*}
				\inf_{\lambda\in\mathbb{C}}\norm{A+\lambda B}=\norm{A}.
			\end{equation*}
			By Theorem \ref{thm:minimax}, we have that
			\begin{equation*}
				\sup_{\norm{x}=1}\inf_{\lambda\in\mathbb{C}}\norm{Ax+\lambda Bx}=\inf_{\lambda\in\mathbb{C}}\norm{A+\lambda B},
			\end{equation*}
			and thus
			\begin{equation}\label{eq:sup_inf_eq_A}
				\sup_{\norm{x}= 1}\inf_{\lambda\in\mathbb{C}}\norm{Ax+\lambda Bx}=\norm{A}.
			\end{equation}
			From here, we deduce that for each $n\in\mathbb{N}$ there exists $x_n\in\mathcal{H}$, $\norm{x_n}= 1$, such that
			\begin{equation}\label{eq:compact_form}
				\inf_{\lambda\in\mathbb{C}}\norm{Ax_n+\lambda Bx_n}>\norm{A}-\frac{1}{n}.
			\end{equation}
			Since $\norm{A}\geq\frac{1}{n}$, $n\in\mathbb{N}$, this is equivalent with the fact that there exists a sequence $\{x_n\}_{n\in\mathbb{N}}$ of unit vectors in $\mathcal{H}$, such that for all $\lambda\in\mathbb{C}$ and all $n\in\mathbb{N}$,
			\begin{equation}\label{eq:expanded_form}
				\norm{A}^2-\frac{2}{n}\norm{A}+\frac{1}{n^2}<	\norm{Ax_n}^2+\aps{\lambda}^2\norm{Bx_n}^2+2\,\mathrm{Re\,}(\overline{\lambda}\skal{Ax_n}{Bx_n}).
			\end{equation}
			
			It follows from \eqref{eq:compact_form} that
			\begin{equation*}
				\norm{A}-\frac{1}{n}<\inf_{\lambda\in\mathbb{C}}\norm{Ax_n+\lambda Bx_n}\leq \norm{Ax_n+0\cdot Bx_n}=\norm{Ax_n}\leq \norm{A}.
			\end{equation*}
			By letting $n\to\infty$, we immediately obtain that $\lim\limits_{n\to\infty}\norm{Ax_n}=\norm{A}$. 
			
			Furthermore,   \eqref{eq:expanded_form} yields
			\begin{equation*}
				\norm{A}^2-\frac{2}{n}\norm{A}+\frac{1}{n^2}< \norm{A}^2+\aps{\lambda}^2\norm{B}^2+2\,\mathrm{Re\,}(\overline{\lambda}\skal{Ax_n}{Bx_n}),\quad \lambda\in\mathbb{C},\, n\in\mathbb{N},
			\end{equation*}
			and thus,
			\begin{equation}\label{eq:simplified_form}
				-\frac{2}{n}\norm{A}+\frac{1}{n^2}< \aps{\lambda}^2\norm{B}^2+2\,\mathrm{Re\,}(\overline{\lambda}\skal{Ax_n}{Bx_n}),\quad \lambda\in\mathbb{C},\, n\in\mathbb{N}.
			\end{equation}
			Since $\{\skal{Ax_n}{Bx_n}\}_{n\in\mathbb{N}}$ is a bounded sequence of complex numbers, without loss of generality we may assume that $\lim\limits_{n\to\infty}\skal{Ax_n}{Bx_n}$ exists (otherwise, we can choose an appropriate subsequence). Set $L:=\lim\limits_{n\to\infty}\skal{Ax_n}{Bx_n}$. By letting $n\to\infty$ in \eqref{eq:simplified_form}, we obtain
			\begin{equation}\label{eq:L_all_lambda}
				0\leq \aps{\lambda}^2\norm{B}^2+2\,\mathrm{Re\,}(\overline{\lambda} L),\quad \lambda\in\mathbb{C}.
			\end{equation}
			
			By taking $\lambda>0$ and $\lambda<0$ in \eqref{eq:L_all_lambda}, we get
			\begin{equation*}
				0\leq \lambda\norm{B}^2+2\,\mathrm{Re\,}L
			\end{equation*}
			and
			\begin{equation*}
				0\geq \lambda\norm{B}^2+2\,\mathrm{Re\,}L,
			\end{equation*}
			respectively. By letting $\lambda\to 0$ in the previous two inequalities, it immediately follows that $\mathrm{Re\,}L=0$.
			
			By considering $i\lambda$ instead of $\lambda$ in \eqref{eq:L_all_lambda} and using the fact that $\mathrm{Re\,}(iz)=-\mathrm{Im\,}z$ for any $z\in\mathbb{C}$, we may also deduce that $\mathrm{Im\,}L=0$. Therefore, $L=0$, as desired. 
		\end{proof}

		\begin{remark}
			We can easily see that the previous proof works perfectly well in the case of real Hilbert spaces (or just inner-product spaces). Moreover, by omitting the final paragraph of the proof, we can derive \cite[Theorem 2.4]{Turnsek17} concerning $r$-orthogonality. 
		\end{remark}

		Although the proof in a finite-dimensional case can be easily derived as a corollary of Theorem \ref{thm:bhatia_semrl_general}, here we  also present an elementary direct proof using again Theorem \ref{thm:minimax} with a somewhat different argumentation. 
		
		\begin{theorem}\label{thm:bhatia_semrl_finite}
			Let $\mathcal{H}$ be a finite-dimensional complex Hilbert space, $\dim\mathcal{H}\geq 2$, and let $A,B\in\mathfrak{B}(\mathcal{H})$. Then $A\perp B$ if and only if there exists $x\in\mathcal{H}$, $\norm{x}=1$, such that $\norm{Ax}=\norm{A}$ and $\skal{Ax}{Bx}=0$. 
		\end{theorem}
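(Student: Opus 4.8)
The plan is to mirror the structure of Theorem \ref{thm:bhatia_semrl_general}, exploiting the crucial simplification that finite-dimensionality lets me replace the approximating sequence by a single extremal vector. The ``if'' direction needs no change: assuming a unit vector $x$ with $\norm{Ax}=\norm{A}$ and $\skal{Ax}{Bx}=0$, the computation
\begin{align*}
\norm{A+\lambda B}^2 &\geq \norm{(A+\lambda B)x}^2 \\
&= \norm{Ax}^2+\aps{\lambda}^2\norm{Bx}^2+2\,\mathrm{Re\,}(\overline{\lambda}\skal{Ax}{Bx}) \\
&\geq \norm{A}^2
\end{align*}
holds for every $\lambda\in\mathbb{C}$, whence $A\perp B$.

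For the converse, I would assume $A\perp B$ with $A\neq 0$ (the case $A=0$ being trivial, as any unit vector works). Exactly as before, the hypothesis gives $\inf_{\lambda}\norm{A+\lambda B}=\norm{A}$, and Theorem \ref{thm:minimax} yields $\sup_{\norm{x}=1}\inf_{\lambda\in\mathbb{C}}\norm{Ax+\lambda Bx}=\norm{A}$. Here is the point of departure from the sequential argument: I would set $f(x):=\inf_{\lambda\in\mathbb{C}}\norm{Ax+\lambda Bx}$ and observe that, for each fixed $\lambda$, the map $x\mapsto\norm{Ax+\lambda Bx}$ is continuous, so $f$ is upper semicontinuous as an infimum of continuous functions. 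Since $\dim\mathcal{H}<\infty$, the unit sphere is compact, and an upper semicontinuous function attains its supremum on a compact set. Hence there is a unit vector $x_0$ with $f(x_0)=\norm{A}$.

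It then remains to extract the two desired equalities from the single identity $f(x_0)=\norm{A}$. Choosing $\lambda=0$ inside the infimum gives $\norm{A}=f(x_0)\leq\norm{Ax_0}\leq\norm{A}$, so $\norm{Ax_0}=\norm{A}$. For the orthogonality I would use the explicit value of the infimum: completing the square in $\lambda$ shows that, when $Bx_0\neq 0$,
\begin{equation*}
f(x_0)^2=\norm{Ax_0}^2-\frac{\aps{\skal{Ax_0}{Bx_0}}^2}{\norm{Bx_0}^2},
\end{equation*}
the minimum being attained at $\lambda=-\skal{Ax_0}{Bx_0}/\norm{Bx_0}^2$. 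Combining this with $f(x_0)^2=\norm{A}^2=\norm{Ax_0}^2$ forces $\skal{Ax_0}{Bx_0}=0$, and the case $Bx_0=0$ gives this equality trivially.

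I expect the main obstacle to be the justification that the supremum is actually attained: this is precisely the step where compactness of the unit sphere replaces the subsequence extraction of the infinite-dimensional proof, and the upper semicontinuity of $f$ is the technical fact that makes the substitution legitimate. Note that $f$ need not be continuous at points where $Bx=0$, so upper semicontinuity is the correct (and sufficient) regularity to invoke.
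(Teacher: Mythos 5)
Your proposal is correct, and it follows the paper's overall strategy --- the Minimax Theorem plus compactness of the unit sphere to upgrade the approximating sequence of Theorem \ref{thm:bhatia_semrl_general} to a single extremal vector --- but the two technical steps are executed differently. For the attainment of the supremum, the paper extracts a convergent subsequence $x_n\to x$ from a maximizing sequence and then shows that $f_n(\lambda)=\norm{(A+\lambda B)x_n}$ converges uniformly to $\norm{(A+\lambda B)x}$ on the closed unit disk $\overline{\mathbb{D}}$ (the restriction to a compact set of parameters is what makes the uniform convergence, and hence the interchange of limit and infimum, work); you instead invoke the abstract facts that $x\mapsto\inf_{\lambda}\norm{Ax+\lambda Bx}$ is upper semicontinuous as an infimum of continuous functions and that an upper semicontinuous function attains its supremum on a compact set. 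Your route is slightly slicker and avoids the restriction to $\overline{\mathbb{D}}$, at the price of citing these two standard but nontrivial facts; the paper's is more hands-on. For the final step, the paper deduces $\skal{Ax}{Bx}=0$ from $\norm{Ax+\lambda Bx}\geq\norm{Ax}$ for $\lambda\in\overline{\mathbb{D}}$, implicitly reusing the vector-level argument from Theorem \ref{thm:bhatia_semrl_general}, whereas you compute the infimum in closed form, $f(x_0)^2=\norm{Ax_0}^2-\aps{\skal{Ax_0}{Bx_0}}^2/\norm{Bx_0}^2$ when $Bx_0\neq 0$, which makes the conclusion immediate and is arguably more self-contained. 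Your closing remark that $f$ is only upper semicontinuous (not continuous) at points where $Bx=0$, and that this suffices, is accurate and shows you have identified exactly where the regularity is needed.
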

		
		\begin{proof}
			If such a vector exists, then
			
			\begin{align*}
				\norm{A+\lambda B}^2&\geq \norm{(A+\lambda B)x}^2\\
				&= \norm{Ax}^2+\aps{\lambda}^2\norm{Bx}^2\\
				&\geq \norm{A}^2,
			\end{align*}
			and thus, $A\perp B$.\par 
			\medskip 
			
			Conversely, if $A\perp B$, we have that \eqref{eq:sup_inf_eq_A} holds. Hence, there exists a sequence $\{x_n\}_{n\in\mathbb{N}}$ of unit vectors in $\mathcal{H}$ such that
			\begin{equation}\label{eq:lim_inf}
				\lim\limits_{n\to\infty}\inf_{\lambda\in\overline{\mathbb{D}}}\norm{(A+\lambda B)x_n}=\norm{A},
			\end{equation}
			where $\overline{\mathbb{D}}$ denotes the closed unit disk in $\mathbb{C}$. Since $\mathcal{H}$ is finite-dimensional, without loss of generality, we may assume that $\lim\limits_{n\to\infty}x_n=x$, for some $x\in\mathcal{H}$, $\norm{x}=1$.
			
			For each $n\in\mathbb{N}$, let $f_n: \overline{\mathbb{D}}\to \mathbb{R}$ be defined as
			\begin{equation*}
				f_n(\lambda)= \norm{(A+\lambda B)x_n},\quad \lambda\in \overline{\mathbb{D}}.
			\end{equation*}
			It is easy to check that the sequence $\{f_n\}_{n\in\mathbb{N}}$ uniformly converges to $f$ on $\overline{\mathbb{D}}$, where $$f(\lambda)=\norm{(A+\lambda B)x},\quad  \lambda\in \overline{\mathbb{D}}.$$
			Thus, the limit and infimum can switch places in \eqref{eq:lim_inf}, i.e.,
			\begin{equation}\label{eq:inf_x}
				\inf_{\lambda\in\overline{\mathbb{D}}}\norm{(A+\lambda B)x}=\norm{A}. 
			\end{equation}
			
			The conclusion now easily follows. Indeed, from 
			\begin{equation*}
				\norm{A}=\inf_{\lambda\in\overline{\mathbb{D}}}\norm{(A+\lambda B)x}\leq \norm{Ax}\leq \norm{A},
			\end{equation*}
			we have that $\norm{Ax}=\norm{A}$. Equality \eqref{eq:inf_x} now implies that for each $\lambda\in\overline{\mathbb{D}}$, 
			\begin{equation*}
				\norm{Ax+\lambda Bx}\geq \norm{Ax},
			\end{equation*}
			which is equivalent with the fact that $\skal{Ax}{Bx}=0$. This completes the proof. 
		\end{proof}

		\bigskip 
		\section*{Declarations}
		\noindent{\bf{Funding}}\\
		This work has been supported by the Ministry of Science, Technological Development and Innovation of the Republic of Serbia [Grant Number: 451-03-137/2025-03/ 200102].			
		
		\vspace{0.5cm}
		
		\noindent{\bf{Availability of data and materials}}\\
		\noindent No data were used to support this study.
		\vspace{0.5cm}\\
		\noindent{\bf{Competing interests}}\\
		\noindent The author declares that he has no competing interests.
		\vspace{0.5cm}
		
		
		\vspace{0.5cm}
		
		
		
	\end{document}